\numberwithin{equation}{section}
\newtheorem{theorem}{Theorem}
\begin{document}
\author{Alexander E Patkowski}
\title{On the $q$-Pell sequences and sums of tails}
\date{\vspace{-5ex}}
\maketitle
\abstract{We examine the $q$-Pell sequences and their applications to weighted partition theorems and values of $L$-functions. We also put them 
into perspective with sums of tails.}

\section{Introduction}
In [11] we find that W. Y. Chen and K. Q. Ji offered weighted partitions theorems relating partitions into distinct parts to partitions into odd parts by paraphrasing 
some special identities due to Ramanujan (see the section on ``Special Identities" [6, pg. 149]). Namely, we have [11, eq.(4.13)]
\begin{equation}\sum_{\mu\in D}\left(\mu_1+n(\mu)+\frac{1-(-1)^{r(\mu)}}{2}\right)q^{|\mu|}=2\sum_{\lambda\in O}n(\lambda)q^{|\lambda|}.\end{equation}
Here $D$ is the set of partitions into distinct parts, and $O$ is the set of partitions into odd parts. Recall that a partition $\lambda$ of $n$ is a nonincreasing sequence $\lambda=(\lambda_1, \lambda_2, \cdots, \lambda_k),$ where the $\lambda_i,$ for $1\le i\le k,$ are parts that sum to $n.$ We denote $n(\lambda)$ as the number of parts of a partition $\lambda=(\lambda_1, \lambda_2, \cdots, \lambda_k),$ where $\lambda_1$ denotes the largest part. We denote
$r(\lambda)$ to be the largest part minus the number of parts, or the ``rank" of the partition. As it stands, equation (1.1)
is equivalent to the identity
\begin{equation}\sum_{n\ge0}\left((-q)_{\infty}-(-q)_n\right)=(-q)_{\infty}\left(-\frac{1}{2}+\sum_{n\ge1}\frac{q^n}{1-q^n}\right)+\frac{1}{2}\sum_{n\ge0}\frac{q^{n(n+1)/2}}{(-q)_n}.\end{equation}
Here the $q$-series on the far right side of (1.2) is the distinct rank parity function $\sigma(q)$ and has been studied extensively [2, 6, 9, 11, 17, 19] (see [7] for similar functions). We put $(V;q)_n:=(1-V)(1-Vq)\cdots(1-Vq^{n-1}).$ 
\par Now if we subtract $2\sum_{\mu\in D}n(\mu)q^{|\mu|}$ from both sides of (1.1) we may write
\begin{equation}\sum_{\mu\in D}\left(r(\mu)+\frac{1-(-1)^{r(\mu)}}{2}\right)q^{|\mu|}=2\sum_{\lambda\in O}n(\lambda)q^{|\lambda|}-2\sum_{\mu\in D}n(\mu)q^{|\mu|}.\end{equation}
It is not difficult to see that

\begin{equation}2(-q)_{\infty}\sum_{n\ge1}\frac{q^{2n}}{1-q^{2n}}=2\sum_{\lambda\in O}n(\lambda)q^{|\lambda|}-2\sum_{\mu\in D}n(\mu)q^{|\mu|}.\end{equation}
Hence, we have shown combinatorially the $q$-series identity [19]

\begin{equation}\sum_{n\ge0}\left((-q)_{\infty}-V_n(q)\right)=(-q)_{\infty}\left(-\frac{1}{2}+2\sum_{n\ge1}\frac{q^{2n}}{1-q^{2n}}\right)+\frac{1}{2}\sum_{n\ge0}\frac{q^{n(n+1)/2}}{(-q)_n},\end{equation}
where $V_m(q)$ is the generating function for the number of partitions of $n$ with rank $m.$ In fact, Fine [19, pg.8] has studied this function which is given by 
\begin{equation}V_m(q)=\sum_{n\ge0}{n+m \brack n}_qq^{n(n+1)/2},\end{equation}
$$\sum_{n\ge0}V_n(q)t^n=\sum_{n\ge0}\frac{q^{n(n+1)/2}}{(t)_{n+1}}.$$
Here 
\begin{equation}{n \brack m}_q:=\frac{(q)_n}{(q)_m(q)_{n-m}}.\end{equation}
The relation between (1.5) and Ramanujan's other identity related to $\sigma(q)$ [6] 
\begin{equation}\sum_{n\ge0}\left(\frac{1}{(q;q^2)_{\infty}}-\frac{1}{(q;q^2)_{n+1}}\right)=\frac{1}{(q;q^2)_{\infty}}\left(-\frac{1}{2}+\sum_{n\ge1}\frac{q^{2n}}{1-q^{2n}}\right)+\frac{1}{2}\sum_{n\ge0}\frac{q^{n(n+1)/2}}{(-q)_n}.\end{equation}
has been shown analytically in [19].
\par This study offers two more examples of two-variable Rogers-Ramanujan-type $q$-series $f(t,q)$ that satisfy the following properties:
\\*
\\*
\it
\hspace{2mm}(i) $\lim_{t\rightarrow1^{-}}(1-t)f(t,q)=f(q)$ is a weight $0$ modular form.
\\*
\hspace{2mm}(ii) $f(-1,q)$ is a false mock modular form.
\\*
\hspace{2mm}(iii) $\lim_{t\rightarrow1^{-}}\frac{\partial }{\partial t}(1-t)f(t,q)=f(q)D(q)+f(-1,q),$ where $D(q)$ is some linear combination of
generating functions for divisor functions.
\rm
\par This allows us to put sums of tails into perspective with two variable generalizations of Rogers-Ramanujan
type series [1]. The first example of a $q$-series in the literature that satisfies (i)--(iii) appears to be $\sum_{n\ge0}V_n(q)t^n,$ where for (i) we have the modular form $\eta(2z)/\eta(z),$ where as usual
we define the Dedekind eta function
$$\eta(z):=e^{2\pi iz/24}\prod_{n\ge1}^{\infty}(1-e^{2\pi inz}),$$
where $\Im(z)>0.$ \par The main theorems of this paper offer two more examples satisfying (i)--(iii), for the weight $0$ modular forms
$$\frac{\eta(4z)}{\eta(z)}, ~~~~~~~~\mbox{and}~~~~~~~~ \frac{\eta^2(2z)}{\eta^2(z)}\frac{\eta(2z)}{\eta(4z)}.$$
See, especially, [4, Theorem 3] and the commentary associated with that theorem. For more material on sums of tails see [6, 8, 16, 18, 20]. 
\section{The $q$-Pell sequences} 
In [21] Sills and Santos study the sequences
\begin{equation}\omega_n(q)=\sum_{n\ge j\ge0}\sum_{j\ge k\ge0}{j \brack k}_q{n-k \brack j}_qq^{j(j+1)/2+k(k+1)/2},\end{equation}
and
\begin{equation}\Theta_n(q)=\sum_{n\ge j\ge0}\sum_{j\ge k\ge0}{j \brack k}_q{n-k \brack j}_qq^{j(j+1)/2+k(k-1)/2},\end{equation}
where
\begin{equation}\sum_{n\ge0}\omega_n(q)t^n=\sum_{n\ge0}\frac{(-tq)_nt^nq^{n(n+1)/2}}{(t)_{n+1}},\end{equation}
and
\begin{equation}\sum_{n\ge0}\Theta_n(q)t^n=\sum_{n\ge0}\frac{(-t)_nt^nq^{n(n+1)/2}}{(t)_{n+1}}.\end{equation}
Here we shall define $L_1(t,q)$ to be (2.3), and $L_2(t,q)$ to be (2.4). That is, two-variable generalizations of the Lebesgue identities [13]. Then 
$\lim_{t\rightarrow1^{-}}(1-t)L_1(t,q)=(-q^2;q^2)_{\infty}/(q;q^2)_{\infty},$ and $\lim_{t\rightarrow1^{-}}(1-t)L_2(t,q)=(-q;q^2)_{\infty}/(q;q^2)_{\infty}$ by [13].
\par The function $L_1(-1,q)$ has appeared in [10], and $L_2(-1,q)$ has appeared in Lovejoy [14], where they have related these functions to the 
arithmetic of $\mathbb{Q}(\sqrt{2}).$ As usual, let $N(a)$ be the norm of an ideal $a\subset\mathbb{Z}[\sqrt{2}].$ [10] showed that
\begin{equation}\sum_{n\ge0}\frac{(q)_n(-1)^nq^{n(n+1)/2}}{(-q)_{n}}=\sum_{\substack{a\subset\mathbb{Z}[\sqrt{2}]\\ N(a)\equiv1\pmod{8}}}(-q)^{(N(a)-1)/8},\end{equation}
and Lovejoy [14] showed that 
\begin{equation}\sum_{n\ge1}\frac{(q)_{n-1}(-1)^nq^{n(n+1)/2}}{(-q)_{n}}=\sum_{\substack{a\subset\mathbb{Z}[\sqrt{2}]\\ N(a)<0}}i^{N(a)^2+N(a)}q^{|N(a)|}.\end{equation}
The functions (2.5) and (2.6) appear as ``error" series in the following \it sums of tails \rm identities. These are the cases [4, Theorem 1, $a=q^{3/2},$ $t=-q$] and
[4, Theorem 1, $a=q^{3/2},$ $t=-q^{1/2}$]. (Equation (2.8) follows from (2.10).)
\\*
\\*
{\bf Lemma 1.} \it We have the following sums of tails,
\begin{equation}\sum_{n\ge0}\left(\frac{(-q^2;q^2)_{\infty}}{(q;q^2)_{\infty}}-\frac{(-q^2;q^2)_{n}}{(q;q^2)_{n+1}}\right)=\frac{(-q^2;q^2)_{\infty}}{(q;q^2)_{\infty}}\left(-\frac{1}{2}+\sum_{n\ge1}\frac{q^{2n}}{1-q^{2n}}+\sum_{n\ge1}\frac{q^{2n+1}}{1+q^{2n+1}}\right)+\frac{1}{2}\sum_{n\ge0}\frac{(q)_n(-1)^nq^{n(n+1)/2}}{(-q)_{n}},\end{equation}

\begin{equation}\sum_{n\ge0}\left(\frac{(-q;q^2)_{\infty}}{(q;q^2)_{\infty}}-\frac{(-q;q^2)_{n}}{(q;q^2)_{n+1}}\right)=\frac{(-q;q^2)_{\infty}}{(q;q^2)_{\infty}}\left(\sum_{n\ge1}\frac{q^{2n}}{1-q^{2n}}+\sum_{n\ge1}\frac{q^{2n}}{1+q^{2n}}\right)+\sum_{n\ge1}\frac{(q)_{n-1}(-1)^nq^{n(n+1)/2}}{(-q)_{n}},\end{equation}
\rm
We also need a well-known transformation of Fine [12, pg. 5, eq.(6.3)]. 
\\*
\\*
{\bf Lemma 2.} \it We have,
\begin{equation}\sum_{n\ge0}\frac{(aq)_n}{(bq)_n}t^n=\frac{1-b}{1-t}\sum_{n\ge0}\frac{(atq/b)_n}{(tq)_n}b^n. \end{equation}

\rm
We will need also need the special case of Lemma 2 ($a=-q^{-1/2},$ $b=q^{1/2},$ $t=-q,$) coupled with a result due to Lovejoy [15]:
\begin{equation}q\sum_{n\ge0}\frac{(-q;q^2)_n}{(q;q^2)_{n+1}}(-q^2)^n=-\sum_{n\ge1}\frac{(q^2;q^2)_{n-1}}{(-q^2;q^2)_{n}}q^n=-\sum_{n\ge1}\frac{(-q;-q)_{n-1}(-1)^n(-q)^{n(n+1)/2}}{(q;-q)_{n}}=-L_2(-1,-q).\end{equation}
{\bf Lemma 3. [6, pg.30, $b\rightarrow-t,$ $a=xt$]}
\begin{equation}\sum_{n\ge0}\frac{(xq;q^2)_n}{(t;q^2)_{n+1}}(tq)^n=\sum_{n\ge0}\frac{(xq)_{n}t^nq^{n(n+1)/2}}{(t)_{n+1}}.\end{equation}

We are now ready to prove the main $q$-series identities of the paper.
\begin{theorem}
\begin{equation}\sum_{n\ge0}\left(\frac{(-q^2;q^2)_{\infty}}{(q;q^2)_{\infty}}-\omega_n(q)\right)=\frac{(-q^2;q^2)_{\infty}}{(q;q^2)_{\infty}}\left(-\frac{1}{2}+2\sum_{n\ge1}\frac{q^{2n-1}}{1-q^{2n-1}}\right)+\frac{1}{2}\sum_{n\ge0}\frac{(q)_n(-1)^nq^{n(n+1)/2}}{(-q)_{n}},\end{equation}

\begin{equation}\sum_{n\ge0}\left(\frac{(-q;q^2)_{\infty}}{(q;q^2)_{\infty}}-\Theta_n(q)\right)=2\frac{(-q;q^2)_{\infty}}{(q;q^2)_{\infty}}\left(\sum_{n\ge1}\frac{q^{2n-1}}{1-q^{2n-1}}\right)+\sum_{n\ge1}\frac{(q)_{n-1}(-1)^nq^{n(n+1)/2}}{(-q)_{n}}.\end{equation}
\end{theorem}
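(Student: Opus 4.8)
The plan is to derive both identities from Lemma~1 by evaluating, in closed form, the difference between the $q$-Pell sequences $\omega_n(q),\ \Theta_n(q)$ and the partial quotients $\frac{(-q^{2};q^{2})_n}{(q;q^2)_{n+1}},\ \frac{(-q;q^2)_n}{(q;q^2)_{n+1}}$ appearing inside Lemma~1; the key observation is that one application of Lemma~3 and one of Fine's transformation (Lemma~2), with the right parameters, rewrite the two relevant generating functions so that they agree up to replacing $q^n$ by $(tq)^n$ under the sum, which makes their difference telescope. Take the $\omega_n$ case. Lemma~3 with $x=-t$ and $(t;q^2)_{n+1}=(1-t)(tq^2;q^2)_n$ give
$$\sum_{n\ge0}\omega_n(q)t^n=\sum_{n\ge0}\frac{(-tq;q^2)_n}{(t;q^2)_{n+1}}(tq)^n=\frac{1}{1-t}\sum_{n\ge0}\frac{(-tq;q^2)_n}{(tq^2;q^2)_n}(tq)^n,$$
whereas Lemma~2 with $q$ replaced by $q^2$, $a=-1$, $b=q$, together with $(q;q^2)_{n+1}=(1-q)(q^3;q^2)_n$, give
$$\sum_{n\ge0}\frac{(-q^2;q^2)_n}{(q;q^2)_{n+1}}t^n=\frac{1}{1-t}\sum_{n\ge0}\frac{(-tq;q^2)_n}{(tq^2;q^2)_n}q^n.$$
Subtracting, factoring out the common kernel, and using $\frac{q^n-(tq)^n}{1-t}=q^n(1+t+\cdots+t^{n-1})$, then letting $t\to1^{-}$ coefficientwise (valid in $\mathbb{Z}[[q]]$, since the $n$-th summand is $O(q^n)$),
$$\sum_{n\ge0}\Bigl(\frac{(-q^2;q^2)_n}{(q;q^2)_{n+1}}-\omega_n(q)\Bigr)=\sum_{n\ge0} n\,\frac{(-q;q^2)_n}{(q^2;q^2)_n}q^n.$$

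To evaluate the last sum I would apply the $q$-binomial theorem in base $q^2$, $\sum_{n\ge0}\frac{(-q;q^2)_n}{(q^2;q^2)_n}z^n=\frac{(-qz;q^2)_\infty}{(z;q^2)_\infty}$, differentiate by $z\frac{\partial}{\partial z}$ using logarithmic differentiation, and set $z=q$, obtaining
$$\sum_{n\ge0} n\,\frac{(-q;q^2)_n}{(q^2;q^2)_n}q^n=\frac{(-q^2;q^2)_\infty}{(q;q^2)_\infty}\Bigl(\sum_{n\ge1}\frac{q^{2n-1}}{1-q^{2n-1}}+\sum_{n\ge1}\frac{q^{2n}}{1+q^{2n}}\Bigr).$$
Adding this to the first identity of Lemma~1 and simplifying the divisor sums elementarily (using $\frac{x}{1-x}+\frac{x}{1+x}=\frac{2x^2}{1-x^2}$ and separating odd from even indices), the even-index contributions cancel and one is left with $\frac{(-q^2;q^2)_\infty}{(q;q^2)_\infty}\bigl(-\frac12+2\sum_{n\ge1}\frac{q^{2n-1}}{1-q^{2n-1}}\bigr)$, while the error term $\frac12\sum_{n\ge0}\frac{(q)_n(-1)^nq^{n(n+1)/2}}{(-q)_n}=L_1(-1,q)$ (evaluated arithmetically by [10]) is inherited unchanged from Lemma~1; this is the first identity of the theorem.

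The second identity follows by the identical scheme. Lemma~3 at $x=-t/q$ gives $\sum_n\Theta_n(q)t^n=\frac{1}{1-t}\sum_n\frac{(-t;q^2)_n}{(tq^2;q^2)_n}(tq)^n$, and Lemma~2 (base $q^2$) at $a=-1/q$, $b=q$ gives $\sum_n\frac{(-q;q^2)_n}{(q;q^2)_{n+1}}t^n=\frac{1}{1-t}\sum_n\frac{(-t;q^2)_n}{(tq^2;q^2)_n}q^n$; the same telescoping and $t\to1^{-}$ limit reduce the difference to $\sum_{n\ge0} n\,\frac{(-1;q^2)_n}{(q^2;q^2)_n}q^n$, and the $q$-binomial theorem with $a=-1$, i.e.\ $\sum_n\frac{(-1;q^2)_n}{(q^2;q^2)_n}z^n=\frac{(-z;q^2)_\infty}{(z;q^2)_\infty}$, together with logarithmic differentiation at $z=q$, evaluates it to $\frac{(-q;q^2)_\infty}{(q;q^2)_\infty}\bigl(\sum_{n\ge1}\frac{q^{2n-1}}{1-q^{2n-1}}+\sum_{n\ge1}\frac{q^{2n-1}}{1+q^{2n-1}}\bigr)$. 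Adding the second identity of Lemma~1, simplifying the divisor sums as above, and carrying over the error series $\sum_{n\ge1}\frac{(q)_{n-1}(-1)^nq^{n(n+1)/2}}{(-q)_n}$ — which is tied to $L_2(-1,q)$ and to the ideals of $\mathbb{Z}[\sqrt{2}]$ through Lovejoy [15] and the special case of Lemma~2 recorded in (2.10) — produces the second identity.

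The step I expect to be the real obstacle is the normalization carried out in the two displayed generating-function identities: recognizing that exactly one correctly parametrized use each of Lemma~2 and Lemma~3 puts $\sum_n\omega_n(q)t^n$ (resp.\ $\sum_n\Theta_n(q)t^n$) and its Lemma~1 companion into the common form $\frac{1}{1-t}\sum_n(\text{same kernel})\cdot\{(tq)^n\text{ or }q^n\}$, since it is precisely this matching of kernels that collapses the difference into a single sum $\sum_n n(\cdots)q^n$ that logarithmic differentiation handles. Everything after that — the $q$-binomial evaluation and the divisor-sum bookkeeping against Lemma~1 — is routine, the one other point worth a remark being the coefficientwise justification of the $t\to1^{-}$ passage.
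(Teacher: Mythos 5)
Your proposal is correct and is essentially the paper's own proof: the author likewise applies Lemma~3 with $x=-t$ and Lemma~2 with $q\to q^2$, $a=-1$, $b=q$ to reduce the tail-difference to $\sum_{n\ge1} n\,\frac{(-q;q^2)_n}{(q^2;q^2)_n}q^n$, evaluates that sum by (logarithmically) differentiating $(-tq^2;q^2)_\infty/(tq;q^2)_\infty$ at $t=1$, and then invokes Lemma~1 --- your coefficientwise telescoping of $(q^n-(tq)^n)/(1-t)$ is exactly the paper's $\epsilon(1-t)$ operator written out. One small caution on the bookkeeping: the divisor sums only collapse to $2\sum_{n\ge1}q^{2n-1}/(1-q^{2n-1})$ if the alternating sum in (2.7) is read as $\sum_{n\ge1}q^{2n-1}/(1+q^{2n-1})$ (i.e.\ including the $q/(1+q)$ term, which a low-order expansion of (2.7) confirms is required), and the elementary identity you want there is $\frac{x}{1-x}+\frac{x}{1+x}=\frac{2x}{1-x^2}$ rather than $\frac{2x^2}{1-x^2}$.
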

\begin{proof} In keeping with [4] we shall set $\epsilon$ to be the differential operator $\lim_{t\rightarrow1^{-}}\frac{\partial }{\partial t}.$ Applying Proposition 2.1 of [4] to the left side of (2.3), and putting $x=-t$ in Lemma 3. gives
\begin{align}\epsilon(1-t)\sum_{n\ge0}\omega_n(q)t^n\\
&=\epsilon\sum_{n\ge0}\frac{(-tq)_nt^nq^{n(n+1)/2}}{(tq)_{n}}\\
&=\epsilon(1-t)\sum_{n\ge0}\frac{(-tq;q^2)_n}{(t;q^2)_{n+1}}(tq)^n\\
&=\epsilon(1-t)\sum_{n\ge0}\frac{(-q^2;q^2)_nt^n}{(q;q^2)_{n+1}}+\sum_{n\ge1}\frac{(-q;q^2)_n}{(q^2;q^2)_{n}}nq^n\\
&=\epsilon(1-t)\sum_{n\ge0}\frac{(-q^2;q^2)_nt^n}{(q;q^2)_{n+1}}+\epsilon\frac{(-tq^2;q^2)_{\infty}}{(tq;q^2)_{\infty}}\\
&=\epsilon(1-t)\sum_{n\ge0}\frac{(-q^2;q^2)_nt^n}{(q;q^2)_{n+1}}+\frac{(-q^2;q^2)_{\infty}}{(q;q^2)_{\infty}}\left(\sum_{n\ge1}\frac{q^{2n}}{1+q^{2n}}+\sum_{n\ge1}\frac{q^{2n+1}}{1-q^{2n+1}}\right)
\end{align}
Line (2.17) follows from the specialization of Lemma 2 with $q\rightarrow q^2,$ $b=q,$ $a=-1,$ and standard properties of $\epsilon.$ The final step to prove (2.12), requires invoking Proposition 2.1 of [4] coupled
with (2.7) of Lemma 1. 
\par The proof of (2.13) is very similar, but we use (2.8) of Lemma 1 coupled with (2.4) and (2.10).
\end{proof}
The methods employed in [4, 8] to obtain special values of $L$-functions may be applied to our main theorem. We consider equation (2.13) after noting that $(-q;q^2)_{\infty}/(q;q^2)_{\infty}$
vanishes to infinite order when $q\rightarrow-1.$ Define the real quadratic field $K:=\mathbb{Q}(\sqrt{2}).$
\begin{theorem} \it As a formal power series in $t$ we have,
\begin{equation}-\sum_{n\ge0}\Theta_n(-e^{-t})=\sum_{n\ge0}\frac{(-t)^nL_K(-n)}{n!},\end{equation}
where
\begin{equation}L_K(s)=\sum_{\substack{a\subset\mathbb{Z}[\sqrt{2}]\\ N(a)<0}}\frac{i^{N(a)^2+N(a)+2|N(a)|}}{|N(a)|^s}.\end{equation}
\end{theorem}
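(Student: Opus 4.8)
The plan is to mimic the ``values of $L$-functions'' argument of [4, 8] applied to equation (2.13). First I would record the two analytic ingredients. From (2.13), since the modular quotient $(-q;q^2)_{\infty}/(q;q^2)_{\infty}$ and each divisor sum $\sum_{n\ge1}q^{2n-1}/(1-q^{2n-1})$ vanish to infinite order as $q\to -1$ (equivalently, as $q=-e^{-t}$ with $t\to 0^{+}$, the relevant product is $O(t^{N})$ for every $N$), the entire right-hand side of (2.13) reduces, modulo a formal power series in $t$ that is identically zero, to the ``error'' series $\sum_{n\ge1}(q)_{n-1}(-1)^nq^{n(n+1)/2}/(-q)_{n}$. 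Hence as a formal power series in $t$,
\begin{equation}
-\sum_{n\ge0}\Theta_n(-e^{-t})=-\sum_{n\ge1}\frac{(q)_{n-1}(-1)^nq^{n(n+1)/2}}{(-q)_{n}}\Bigg|_{q=-e^{-t}}.
\end{equation}
The left side here needs the interpretation that $\sum_{n\ge 0}(\,(-q;q^2)_\infty/(q;q^2)_\infty - \Theta_n(q)\,)$ is, after subtracting the infinitely-vanishing modular term, exactly $-\sum_{n\ge0}\Theta_n(q)$ in the formal sense used in [4, 8]; this is the same convention under which sums of tails are manipulated there, so I would simply cite that.

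Next I would bring in Lovejoy's evaluation (2.10), which identifies that error series with $L_2(-1,-q)$ and with an ideal-theoretic sum over $\mathbb{Z}[\sqrt 2]$:
\begin{equation}
-\sum_{n\ge1}\frac{(q)_{n-1}(-1)^nq^{n(n+1)/2}}{(-q)_{n}}=\sum_{\substack{a\subset\mathbb{Z}[\sqrt{2}]\\ N(a)<0}}i^{N(a)^2+N(a)}q^{|N(a)|},
\end{equation}
this being exactly the content of Lovejoy's identity (2.6) together with (2.10) after the substitution $q\to -q$ is untangled. Substituting $q=-e^{-t}$ turns $q^{|N(a)|}$ into $(-1)^{|N(a)|}e^{-|N(a)|t}=(-1)^{|N(a)|}\sum_{n\ge0}(-|N(a)|t)^n/n! = \sum_{n\ge0} (-t)^n |N(a)|^n (-1)^{|N(a)|}/n!$. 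Collecting the sign $(-1)^{|N(a)|}=i^{2|N(a)|}$ into the phase gives the combined weight $i^{N(a)^2+N(a)+2|N(a)|}$, and interchanging the (formal) sums over $n$ and over ideals $a$ produces $\sum_{n\ge0}(-t)^n L_K(-n)/n!$ with $L_K$ as in (2.21), provided one reads $L_K(-n)=\sum_{a}i^{N(a)^2+N(a)+2|N(a)|}|N(a)|^{n}$ as the value at $s=-n$ of the Dirichlet-type series (2.21), i.e. the sum $\sum_a i^{\cdots}|N(a)|^{-s}$ analytically continued. This is the standard ``Abel-summation / functional-equation'' step: such partial theta–like ideal sums are known to be, term by term in the $t$-expansion, the negative-integer special values of the associated $L$-function (cf. the treatment in [4, 8] and Zagier-type sums of tails), so I would invoke that rather than reprove it.

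The step I expect to be the main obstacle is making the identification of the formal power series coefficients with the values $L_K(-n)$ fully rigorous, rather than merely formal: one must check that the interchange of summation is legitimate in the ring of formal power series in $t$ (each coefficient of $t^n$ on the right is a sum over ideals with $|N(a)|^n$ weight, which is only conditionally/asymptotically convergent and must be regularized as $L_K(-n)$), and that the definition (2.21) of $L_K(s)$ does converge in some right half-plane and continues to the negative integers with the claimed values. The cleanest route is to reduce $L_K(s)$ to a finite combination of Hurwitz zeta functions (or Dirichlet $L$-functions twisted by the genus/ray-class characters of $\mathbb{Q}(\sqrt 2)$ of conductor dividing $8$, reflecting the $\pmod 8$ condition visible in (2.9) and the quartic phase $i^{N(a)^2+N(a)}$): grouping ideals by the residue of $N(a)$ modulo $8$ and by sign, the phase $i^{N(a)^2+N(a)+2|N(a)|}$ becomes locally constant, so $L_K(s)$ is a finite $\mathbb{Z}[i]$-linear combination of series $\sum_{m\equiv r\,(8)} m^{-s}$, each a Hurwitz zeta $8^{-s}\zeta(s,r/8)$, whose values at $s=-n$ are classical (Bernoulli polynomials). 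Once that reduction is in hand, matching $t^n$-coefficients on both sides is immediate, and the theorem follows. I would therefore structure the write-up as: (1) kill the modular term in (2.13); (2) apply (2.6)/(2.10) to rewrite the error series as an ideal sum; (3) substitute $q=-e^{-t}$, expand, and interchange sums; (4) identify the coefficients with $L_K(-n)$ via the Hurwitz-zeta decomposition.
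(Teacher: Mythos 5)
Your overall skeleton is the same as the paper's, whose proof of Theorem 2 is really just a pointer to [4]: kill the modular prefactor in (2.13) because $(-q;q^2)_{\infty}/(q;q^2)_{\infty}$ vanishes to infinite order at $q=-e^{-t}$ (note the Lambert series factor is only $O(1/t)$, not flat; it is the product with the flat modular quotient that dies), identify the surviving error series with Lovejoy's ideal sum (2.6), set $q=-e^{-t}$, and read off the $t$-expansion coefficients as $L_K(-n)$ — the last step being done in [4] by Mellin transforms and the residues of $\Gamma(s)L_K(s)$ at $s=-n$. One cosmetic issue: your two displayed equations each carry a spurious minus sign. From (2.13) one gets $-\sum_{n\ge0}\Theta_n(-e^{-t})$ equal to $+\sum_{n\ge1}(q)_{n-1}(-1)^nq^{n(n+1)/2}/(-q)_n$ evaluated at $q=-e^{-t}$, and (2.6) equates that series, with a plus sign and with no substitution $q\to -q$ to untangle, to $\sum_{a}i^{N(a)^2+N(a)}q^{|N(a)|}$. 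Your two sign slips cancel, so the conclusion is unaffected, but as written each intermediate identity contradicts (2.13) and (2.6) respectively.

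The genuine problem is in the step you call the ``cleanest route.'' Grouping by the residue of $N(a)$ modulo $8$ does make the phase $i^{N(a)^2+N(a)+2|N(a)|}$ constant on each class, but it does not reduce $L_K(s)$ to a finite combination of Hurwitz zeta functions $\sum_{m\equiv r\,(8)}m^{-s}=8^{-s}\zeta(s,r/8)$: distinct ideals share the same norm, and the number of ideals $a\subset\mathbb{Z}[\sqrt2]$ with $|N(a)|=m$ is a divisor-type multiplicative function of $m$, not a function that is constant on residue classes mod $8$. So the asserted decomposition is false, and with it the claimed identification of the regularized coefficients. The correct versions of this step are either your parenthetical alternative — recognize $L_K(s)$ as a finite combination of Hecke $L$-functions for ray class characters of conductor dividing $8\infty$ of $\mathbb{Q}(\sqrt2)$, which supplies the continuation and the values at $s=-n$ — or what [4, 8, 10, 14] actually do: unpack the ideal sum into an explicit double sum over integers (an indefinite/false theta series), so that the summand involves a periodic, mean-zero coefficient function and the standard lemma $\sum_{m\ge1}\chi(m)e^{-mt}\sim\sum_{n\ge0}L(\chi,-n)(-t)^n/n!$, proved by Mellin transform (this is exactly the residue computation of $\Gamma(s)L(s)$ the paper cites) or Euler--Maclaurin, applies termwise. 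With that replacement, your outline coincides with the paper's intended argument.
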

 The method to obtain this result can be found in [4], which amounts to using Mellin transforms, and calculating the residue $s=-n$ of $\Gamma(s)L_K(s),$
so we omit the details.
\par Next we consider a combinatorial interpretation of (2.12) of Theorem 1. Let $D_{\le n}$ be the set of partitions into distinct parts $\le n,$ and let $D_{n}$ be the set of partitions into $n$
distinct parts. Let $DE$ be the set of partitions with distinct evens (i.e. even parts do not repeat) [5].
\\*
\begin{theorem} Define $\gamma(\pi_1,\pi_2):=\pi_2'+n(\pi_1),$ where $\pi_2'$ denotes the largest part of $\pi_2.$ Let $n_o(\pi)$ be the number of odd parts in a partition $\pi.$ Then
\begin{equation}\sum_{n\ge0}\sum_{(\pi_1,\pi_2)\in D_{\le n}\times D_{n}}\left(\gamma(\pi_1,\pi_2)+\frac{1-(-1)^{\gamma(\pi_1,\pi_2)}}{2}\right)q^{|\pi_1|+|\pi_2|}=2\sum_{\pi\in DE}n_o(\pi)q^{|\pi|}.\end{equation}
\end{theorem}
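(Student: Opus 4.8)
The plan is to show that (2.21) is the combinatorial shadow of the analytic identity (2.12) of Theorem 1, exactly as (1.1) is the combinatorial shadow of (1.2). First I would identify the generating-function meaning of each side. On the left, $\sum_n \omega_n(q)$ is, by (2.1), a sum over triples where one sums $q^{j(j+1)/2+k(k+1)/2}$ against two Gaussian binomials; using the standard interpretation of ${m \brack \ell}_q$ as the generating function for partitions inside an $\ell\times(m-\ell)$ box, one reads $\omega_n(q)$ as the generating function for pairs $(\pi_1,\pi_2)$ with $\pi_2\in D_n$ (the staircase $q^{j(j+1)/2}$ together with the box from ${n-k \brack j}_q$ builds a partition into $n$ distinct parts, after the $k$ shift) and $\pi_1\in D_{\le n}$ (the second staircase plus the box ${j \brack k}_q$). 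This matches the index set $D_{\le n}\times D_n$ in (2.21). Then $\epsilon(1-t)\sum_n\omega_n(q)t^n$ — equivalently $\sum_{n\ge0}\big((-q^2;q^2)_\infty/(q;q^2)_\infty - \omega_n(q)\big)$ after the identity $\lim_{t\to1^-}(1-t)L_1(t,q)=(-q^2;q^2)_\infty/(q;q^2)_\infty$ — is precisely $\sum_n\sum_{(\pi_1,\pi_2)}(\text{statistic})\,q^{|\pi_1|+|\pi_2|}$, where the statistic extracted by $\epsilon$ is the ``number of parts'' type weight; one checks that this weight is $\gamma(\pi_1,\pi_2)=\pi_2'+n(\pi_1)$, the largest part of $\pi_2$ plus the number of parts of $\pi_1$, because differentiating $(1-t)\cdot t^n\cdot(\text{stuff in }t)$ at $t=1$ picks up the exponent of $t$, which tracks both the size index $n$ (hence the largest allowed — and, for a ``full'' staircase, the actual largest — part of $\pi_2$) and the $t$-degree coming from the $(-tq)_n t^n$ factor.

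Next I would handle the correction term $\tfrac{1-(-1)^{\gamma}}{2}$. This is the indicator that $\gamma(\pi_1,\pi_2)$ is odd, and on the analytic side it corresponds exactly to the ``error'' series $\tfrac12\sum_{n\ge0}(q)_n(-1)^nq^{n(n+1)/2}/(-q)_n = \tfrac12 L_1(-1,q)$ appearing on the right of (2.12): specialising the two-variable series at $t=-1$ converts $t^{\gamma}$ into $(-1)^{\gamma}$, and the combination $\tfrac12(1-(-1)^{\gamma})$ is what remains after one writes $\mathrm{stat} = \mathrm{stat} + \tfrac12(1-(-1)^{\mathrm{stat}}) - \tfrac12(1-(-1)^{\mathrm{stat}})$ and matches the modular part with the $\epsilon$-derivative of $(1-t)L_1$ and the false-modular part with $L_1(-1,q)$. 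So the left side of (2.21) equals $\sum_{n\ge0}\big((-q^2;q^2)_\infty/(q;q^2)_\infty - \omega_n(q)\big)$ plus $\tfrac12 L_1(-1,q)$, which by Theorem 1, equation (2.12), equals $\dfrac{(-q^2;q^2)_\infty}{(q;q^2)_\infty}\Big(-\tfrac12+2\sum_{n\ge1}\dfrac{q^{2n-1}}{1-q^{2n-1}}\Big)+L_1(-1,q)$.

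For the right side, I would show $\dfrac{(-q^2;q^2)_\infty}{(q;q^2)_\infty}\Big(-\tfrac12+2\sum_{n\ge1}\dfrac{q^{2n-1}}{1-q^{2n-1}}\Big)+L_1(-1,q) = 2\sum_{\pi\in DE} n_o(\pi)\,q^{|\pi|}$, which is the genuinely new bookkeeping step and the main obstacle. The product $(-q^2;q^2)_\infty/(q;q^2)_\infty$ is the generating function for partitions with distinct even parts (the set $DE$ of [5]): $(-q^2;q^2)_\infty$ supplies the distinct even parts and $1/(q;q^2)_\infty$ the arbitrarily repeated odd parts. Differentiating $1/(q;q^2)_\infty$ logarithmically produces $\sum_{n\ge1}q^{2n-1}/(1-q^{2n-1})$ times the product, which is the generating function for $\sum_{\pi\in DE}n_o(\pi)q^{|\pi|}$; the factor $2$ and the constant $-\tfrac12$ then have to be absorbed, and one must verify that the false theta contribution $L_1(-1,q)$ together with the $-\tfrac12\cdot\text{product}$ term cancels against the ``half-integer'' discrepancy exactly as the analogous terms do in passing from (1.2) to (1.1) — i.e. one invokes (2.7) of Lemma 1 (the sum-of-tails identity for $(-q^2;q^2)_\infty/(q;q^2)_\infty$) to re-express $\tfrac12 L_1(-1,q)$ and reconcile the two divisor sums $\sum q^{2n}/(1-q^{2n})$, $\sum q^{2n+1}/(1+q^{2n+1})$ appearing there with the single sum $2\sum q^{2n-1}/(1-q^{2n-1})$ in (2.12). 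Once the two divisor-sum expressions are shown equal as $q$-series (a routine but careful manipulation of $\sum q^m/(1\pm q^m)$ over even/odd $m$), the combinatorial identity (2.21) follows by comparing coefficients of $q^N$ on both sides. The hard part, as indicated, is pinning down that the $\epsilon$-extracted statistic is exactly $\pi_2'+n(\pi_1)$ and not some other linear combination — this requires tracking the $t$-degree through both Gaussian binomials in (2.1) and through the $(-tq)_n$ factor in (2.3) with care.
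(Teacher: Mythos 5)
Your overall strategy---reading (2.21) as the combinatorial shadow of (2.12), with the $t$-derivative at $t=1$ extracting the weight $\gamma$, a $t=-1$ specialization producing the error series, and the derivative of the infinite product producing $\sum_{\pi\in DE}n_o(\pi)q^{|\pi|}$---is indeed the paper's strategy, but two essential pieces of your plan are missing or wrong. First, the identification of the extracted statistic with $\pi_2'+n(\pi_1)$, which you yourself flag as ``the hard part'' and leave unverified, is the actual content of the proof. The paper settles it not by chasing $t$-degrees through the Gaussian binomials in (2.1) (your proposed detour, which is unnecessary and never carried out), but by writing the two-variable refinement $\sum_{n\ge0}(-aq)_nb^nq^{n(n+1)/2}/(bq)_n=\sum_{n\ge0}\sum_{(\pi_1,\pi_2)\in D_{\le n}\times D_n}a^{n(\pi_1)}b^{\pi_2'}q^{|\pi_1|+|\pi_2|}$, using that $(-aq)_n$ generates $D_{\le n}$ with $a$ marking the number of parts while $b^nq^{n(n+1)/2}/(bq)_n$ generates $D_n$ with $b$ marking the largest part; then $a=b=t$ with $\partial_t$ at $t=1$ gives the left side of (2.12), and $a=b=-1$ gives the error series.

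Second, your bookkeeping of the parity term is incorrect. Since $\frac{1-(-1)^{\gamma}}{2}$ is the indicator that $\gamma$ is odd, its generating function is $\frac12\sum q^{|\pi_1|+|\pi_2|}-\frac12\sum(-1)^{\gamma}q^{|\pi_1|+|\pi_2|}=\frac12\frac{(-q^2;q^2)_{\infty}}{(q;q^2)_{\infty}}-\frac12 E$, where the first term is the unweighted $a=b=1$ evaluation (Lebesgue's identity) and $E:=\sum_{n\ge0}\frac{(q)_n(-1)^nq^{n(n+1)/2}}{(-q)_n}$ is the $a=b=-1$ evaluation. You instead add ``$\tfrac12 L_1(-1,q)$'' with no product term and the wrong sign, and you are consequently left trying to prove $\frac{(-q^2;q^2)_{\infty}}{(q;q^2)_{\infty}}\bigl(-\tfrac12+2\sum_{n\ge1}\tfrac{q^{2n-1}}{1-q^{2n-1}}\bigr)+E=2\sum_{\pi\in DE}n_o(\pi)q^{|\pi|}$, which is false (it would force $E$ to equal half the infinite product); no amount of rearranging divisor sums will fix that. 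With the correct accounting, the $-\tfrac12\cdot$product term and the $+\tfrac12 E$ term of (2.12) cancel exactly against the indicator contribution, so the left side of (2.21) collapses to $2\frac{(-q^2;q^2)_{\infty}}{(q;q^2)_{\infty}}\sum_{n\ge1}\frac{q^{2n-1}}{1-q^{2n-1}}$, which equals the right side upon differentiating $(-q^2;q^2)_{\infty}/(tq;q^2)_{\infty}=\sum_{\pi\in DE}t^{n_o(\pi)}q^{|\pi|}$ at $t=1$; in particular, the re-invocation of Lemma 1, equation (2.7), that you propose is a red herring---that reconciliation of divisor sums belongs to the proof of Theorem 1, not to this deduction.
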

\begin{proof} We consider the combinatorial interpretation related to that given in [10, pg. 400]. Note that $(-aq)_n$ generates partitions into distinct 
parts $\le n$ and $a$ keeps track of the number of parts. The function $b^nq^{n(n+1)/2}/(bq)_n$ generates partitions into $n$ distinct partitions
and $b$ keeps track of the largest part. Hence,
\begin{equation}\sum_{n\ge0}\frac{(-aq)_nb^nq^{n(n+1)/2}}{(bq)_n}=\sum_{n\ge0}\sum_{(\pi_1,\pi_2)\in D_{\le n}\times D_{n}}a^{n(\pi_1)}b^{\pi_2'}q^{|\pi_1|+|\pi_2|}.\end{equation}
Putting $a=b=t$ in (2.23) and differentiating with respect to $t$ and setting $t=1$ gives the left side of (2.12). On the other hand, putting $a=b=-1$ in (2.23) gives the far right 
hand side of (2.12). The proof is complete after noting that differentiating 
\begin{equation}\frac{(-q^2;q^2)_{\infty}}{(tq;q^2)_{\infty}}=\sum_{\pi\in DE}t^{n_o(\pi)}q^{|\pi|},\end{equation}
with respect to $t$ and then setting $t=1$ gives the middle term in (2.12).
\end{proof}
\rm
\section{Some remarks for further research}
It is well-known that [4]
\begin{equation}\sum_{n\ge0}\left(\frac{1}{(q)_{\infty}}-\frac{1}{(q)_n}\right)=\frac{1}{(q)_{\infty}}\sum_{n\ge1}\frac{q^n}{1-q^n}.\end{equation}
The left side is $\epsilon(1-t)/(t)_{\infty},$ which is $\sum_{m\ge1}mp(n,m)q^n,$ where $p(n,m)$ is the number of partitions of $n$ into $m$ parts. Since
the number of partitions of $n$ into $m$ parts is equal to the number of partitions of $n$ with largest part $m,$ we may write
\begin{equation}\sum_{n\ge0}\left(\frac{1}{(q)_{\infty}}-j_n(q)\right)=\frac{2}{(q)_{\infty}}\sum_{n\ge1}\frac{q^n}{1-q^n},\end{equation}
where $j_n(q)$ is the number of partitions wherein the largest plus the number of parts is at most $n.$ In fact, Andrews has studied $j_n(q)$ in [3],
and is given by, 
\begin{equation} j_{n}(q)=\sum_{0\le m\le n-1}q^m{n-1 \brack m}_q,\end{equation}
for $n>0,$ and $j_{0}(q)=1.$
This leads us to consider the series [3]
$$M(q,t)=\sum_{n\ge0}\frac{t^{2n}q^{n^2}}{(t)_{n+1}(tq)_n},$$
which has different properties than those listed in the first section. Furthermore, if we subtract $2\sum_{m\ge1}mp(m,n)$ from both sides of (3.2), we find $\sum_{m}mN(m,n)=0.$ This is a well-known result concerning $N(m,n),$ the number of partitions of $n$ with rank $m.$ 
\par At this point it is clear $M(q,t)$ does not appear to satisfy any of (i)--(iii). Indeed, we would need to replace $(ii)$ with $M(q,-1)$ is a mock modular form of weight $1/2,$ and it happens that
$(iii)$ satisfies $\lim_{t\rightarrow1^{-}}\frac{\partial }{\partial t}(1-t)f(t,q)=f(q)D(q),$ which is less interesting without the ``error series" $M(q,-1).$ This is due to the fact that
(i) $\lim_{t\rightarrow1^{-}}(1-t)M(q,t)=M(q)=1/(q)_{\infty}$ is a weight $-1/2$ modular form. In fact, we would 
require a sum of tails over $j_{2n}(q)$ to amend this issue, and satisfy the $(iii)$ in the first section.
\begin{equation}\sum_{n\ge0}\left(\frac{1}{(q)_{\infty}}-j_{2n}(q)\right)=\frac{1}{(q)_{\infty}}\sum_{n\ge1}\frac{q^n}{1-q^n}-\frac{1}{4}\sum_{n\ge0}\frac{q^{n^2}}{(-q)_n^2}.\end{equation}
The last $q$-series on the far right side of (3.4) is one of Ramanujan's third order mock theta functions [3], which generates $\sum_{m}(-1)^mN(m,n).$

1390 Bumps River Rd. \\*
Centerville, MA
02632 \\*
USA \\*
E-mail: alexpatk@hotmail.com
\end{document}